\newtheorem{theorem}{Theorem}[section] 
\newtheorem{proposition}[theorem]{Proposition}
\newtheorem{corollary}[theorem]{Corollary}
\theoremstyle{definition}
\newtheorem{definition}[theorem]{Definition}
\newtheorem{example}[theorem]{Example} 
\newtheorem{remark}[theorem]{Remark} 
\numberwithin{equation}{section}
\def\:{\colon}
\newcommand{\End}[2][]{\operatorname{End}_{#1}(#2)}
\def\P#1{\mathcal{P}(#1)}
\newcommand{\Idl}[1]{\operatorname{Idl}(#1)}
\renewcommand{\Im}[1]{\operatorname{Im}(#1)}
\newcommand{\id}[1]{\operatorname{id}_{#1}}
\def\set#1#2{\left\{{#1}\left.\right|\,{#2}\right\}}
\newcommand{\Sub}[1]{\operatorname{Sub}(#1)}
\newcommand{\cont}{\subseteq}
\begin{document}  
\title{Primitive quantales}

\author{Amartya Goswami}
\address{[1] Department of Mathematics and Applied Mathematics, University of Johannesburg, P.O. Box 524, Auckland Park 2006, South Africa.
[2] National Institute for Theoretical and Computational Sciences (NITheCS), South Africa.} 

\email{agoswami@uj.ac.za}

\author{Elena Caviglia}

\address{[1] Department of Mathematical Sciences, Stellenbosch University, South Africa. [2]  National Institute for Theoretical and Computational Sciences (NITheCS), South Africa.}
\email{elena.caviglia@outlook.com}

\author{Luca Mesiti}
\address{Department of Mathematical Sciences, Stellenbosch University, South Africa.}

\email{luca.mesiti@outlook.com}

\begin{abstract}
We generalize Jacobson's notion of primitive ring to the setting of quantales. We show that every primitive ring gives rise to a primitive quantale of ideals. We then prove a density theorem for strongly primitive quantales. Furthermore, we show that primitive quantales are prime and commutative strongly primitive quantales are field quantales.
\end{abstract}  

\subjclass{06F07, 06B23}



\keywords{Module over a quantale, primitive quantale, density theorem}
 
\maketitle 
   
\section{Introduction}

It is well known (see \cite{J45} and \cite{J56}) that in the theory of noncommutative rings, the notion of primitive ideals plays a crucial role in determining the structure of the ring. Furthermore, in \cite{J56}, a hull-kernel-type topology is introduced on the set of all primitive ideals of a ring, and representations of biregular rings are studied therein. It is well-known that an effective way to develop the theory of primitive ideals is to study the theory of primitive rings. Indeed, there is a correspondence between primitive ideals and primitive rings. It is worth mentioning that primitive ideals and primitive rings have demonstrated their profound importance in understanding various structural aspects of modules \cite{J56, R88}, Lie algebras \cite{KPP12}, enveloping algebras \cite{D96, J83}, PI-algebras \cite{J75}, quantum groups \cite{J95} and skew polynomial rings \cite{I79} among others.

Analogous to Dilworth’s initiation of abstract ideal theory in \cite{Dil62} (see also \cite{And74}), one may naturally inquire about the possibility of abstracting Jacobson’s theory of primitive rings to the setting of lattices. In this context, we believe that the most suitable lattice-theoretic framework is provided by Mulvey’s quantales, as introduced in \cite{Mul86}. The aim of our investigation is to start addressing this question.

In this paper, we generalize Jacobson's notion of primitive ring to the setting of quantales. One of the main challenges in such a setting is the lack of differences, in the sense that the join operation of a quantale has no inverses. In \cite{KN14}, the authors face a similar challenge when extending the notion of primitive ring to the setting of hemirings. They address the lack of differences by considering the ring of differences associated to a hemiring. And they consequently present a density theorem that involves the ring of differences and reduces to use the classical Jacobson Density Theorem. In this paper, we explore what can be done in the setting of quantales despite the lack of differences.

Let us briefly summarize the content of this paper. In Section \ref{moq}, we recall the necessary definitions and fundamental facts concerning modules over quantales and set the stage for the introduction of primitive quantales. We establish several results that may be viewed as extensions of their classical ring-theoretic analogues. Section \ref{pqt} is devoted to defining primitive quantales and introducing the necessary concepts to formulate and prove a density theorem, which is presented in Section \ref{dtq}. Among other results, we show that, for quantales, primitivity implies primeness, and that commutative strongly primitive quantales are field quantales.

\section{Modules over a quantale}\label{moq}

Recall that a \emph{(unital) quantale} is a complete lattice $Q$ endowed with a product operation $\ast\:Q\times Q\to Q$ and an identity element $e\in Q$ such that
\begin{enumerate}
\item $\ast$ distributes over arbitrary joins in both components;
        
\item $\ast$ is associative;
        
\item for every $\alpha\in Q$ we have $\alpha\ast e=\alpha=e\ast \alpha$.
\end{enumerate}
In this work,     we shall only consider quantales that are non-trivial, \textit{i.e.},  different from $\{\bot\}$.

A quantale is called \emph{commutative} if its product $\ast$ is commutative.

A \emph{homomorphism of quantales} between quantales $Q$ and $Q'$ is a function $h\:Q\to Q'$ that preserves arbitrary joins in both variables, products, and identity elements.
A \emph{subquantale} of a quantale $Q$ is a subset $A$ of $Q$ containing the identity element $e$ that is closed under arbitrary joins and products.

\begin{remark}
Subquantales are equivalently given by injective homomorphisms of quantales. 

Moreover, the image $\Im{h}$ of a homomorphism of quantales $h\:Q\to Q'$ is a subquantale of $Q'$. Indeed, the image $\Im{h}$ is certainly a subset of $Q'$ containing $e_{Q'}$, since $h(e_Q)=e_{Q'}$. It is closed under arbitrary joins and products because $h$ preserves arbitrary joins and products.
\end{remark}

\begin{definition}[\cite{Ros94}]
    A \emph{module over a quantale $Q$} (also called a \emph{$Q$-module}) is a complete lattice $M$ equipped with a function $\cdot\:Q\times M\to M$ such that
\begin{enumerate}
\item $\cdot$ preserves arbitrary joins in both components;
        
\item for every $\alpha,\beta\in Q$ and every $m\in M$, we have $(\alpha \ast \beta)\cdot m=\alpha\cdot (\beta\cdot m)$;
        
\item for every $m\in M$, we have $e_Q\cdot m=m$.
\end{enumerate}
\end{definition}

Every complete lattice has a bottom element, given by the empty join. So every quantale and every module over a quantale have a bottom element.

Notice that, in every $Q$-module $M$, for every $m\in M$, we need to have $\bot_Q\cdot m=\bot_M$. Moreover, for every $\alpha\in Q$, $\alpha\cdot \bot_M=\bot_M$. Both are given by the condition that $\cdot$ preserves joins.
\begin{remark}
\label{remend}
Let $N$ be a complete lattice. The set $\End{N}$ of (arbitrary) join-preserving functions from $N$ to $N$ can be endowed with the structure of a quantale. Indeed, we can form joins of functions $N\to N$ pointwise, \textit{i.e.}, $(\bigvee f_i)(n):=\bigvee (f_i(n))$; and every suplattice is a complete lattice. Then composition gives an associative product operation $\ast\:\End{M}\times \End{M}\to \End{M}$, and the identity of $M$ is an identity element for such a product. It is straightforward to prove that $\End{M}$ is a quantale, using that the elements of $\End{M}$ are join-preserving functions and that joins in $\End{M}$ are defined pointwise.
\end{remark}

\begin{proposition}
Let $Q$ be a quantale. A module over $Q$ is equivalently given by a complete lattice $M$ and a function $[\cdot]\:Q\to \End{M}$ such that
\begin{enumerate}
\item $[\cdot]$ preserves arbitrary joins;

\item for every $\alpha,\beta\in Q$, we have $[\cdot](\alpha\ast \beta)=[\cdot](\alpha)\circ [\cdot](\beta)$;

\item $[\cdot](e_Q)=\id{M}$.
\end{enumerate}
That is, a module over $Q$ is equivalently given by a complete lattice $M$ and a homomorphism of quantales $[\cdot]\:Q\to \End{M}$.
\end{proposition}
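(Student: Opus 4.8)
The plan is to exhibit the claimed equivalence explicitly, via the currying bijection between functions $Q\times M\to M$ and functions from $Q$ to the set of all functions $M\to M$, and then to show that under this bijection the three module axioms correspond exactly to conditions (1)--(3) above; by Remark~\ref{remend} the latter are precisely the conditions for $[\cdot]$ to be a homomorphism of quantales $Q\to\End{M}$, so this yields the final clause as well.

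First I would fix a complete lattice $M$. Given a $Q$-module structure $\cdot\:Q\times M\to M$, define $[\cdot]$ by $[\cdot](\alpha)(m):=\alpha\cdot m$. Conversely, given a function $[\cdot]$ from $Q$ to functions $M\to M$, define $\cdot$ by $\alpha\cdot m:=[\cdot](\alpha)(m)$. These two assignments are visibly mutually inverse at the level of bare functions, so it remains only to match up the conditions on each side.

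Next I would run through the conditions. Preservation of joins by $\cdot$ in the second component says exactly that each $[\cdot](\alpha)$ preserves joins, \textit{i.e.}\ lies in $\End{M}$; so $[\cdot]$ takes values in $\End{M}$ if and only if that module axiom holds. Using that joins in $\End{M}$ are computed pointwise (Remark~\ref{remend}), preservation of joins by $\cdot$ in the first component, namely $(\bigvee_i\alpha_i)\cdot m=\bigvee_i(\alpha_i\cdot m)$ for all $m$, is equivalent to $[\cdot](\bigvee_i\alpha_i)=\bigvee_i[\cdot](\alpha_i)$, which is condition (1). Unwinding definitions, the module identity $(\alpha\ast\beta)\cdot m=\alpha\cdot(\beta\cdot m)$ holding for all $m$ becomes $[\cdot](\alpha\ast\beta)(m)=([\cdot](\alpha)\circ[\cdot](\beta))(m)$ for all $m$, \textit{i.e.}\ condition (2); and $e_Q\cdot m=m$ for all $m$ becomes $[\cdot](e_Q)=\id{M}$, \textit{i.e.}\ condition (3).

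I do not expect a serious obstacle; the only point requiring a little care is the first-component clause, where one must invoke the pointwise description of joins in $\End{M}$ from Remark~\ref{remend} rather than argue purely formally, together with the observation (also recorded in that remark) that a pointwise join of join-preserving maps is again join-preserving, so that condition (1) is even well-typed. Assembling these equivalences gives the stated bijective correspondence, and hence the identification of $Q$-modules with homomorphisms of quantales $[\cdot]\:Q\to\End{M}$.
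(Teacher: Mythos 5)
Your argument is correct and takes essentially the same route as the paper: both curry $\cdot\:Q\times M\to M$ to $[\cdot]\:Q\to\End{M}$ and back, with the module axioms matching conditions (1)--(3). You simply spell out the correspondence of axioms in more detail than the paper's brief sketch, including the (worthwhile) observation that well-typedness of (1) relies on pointwise joins in $\End{M}$ preserving joins.
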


\begin{proof}
Here is a sketch of the proof. Given $\cdot\:Q\times M\to M$, define $[\cdot]\:Q\to \End{M}$ to send $\alpha$ to the join-preserving function $m\mapsto \alpha\cdot m$. Conversely, given $[\cdot]$, define $\alpha\cdot m:=[\cdot](\alpha)(m)$.
\end{proof}

\begin{example}\label{fex}
\begin{enumerate}
\item[]

\item Every quantale $Q$ is a module over $Q$, with scalar multiplication $Q\times Q\to Q$ given by the product. It is easy to see that the three axioms of a $Q$-module  are given by the three axioms of a quantale.

\item \label{exaeverylatticeismodule}
Every complete lattice $N$ can be endowed with a structure of a module over the quantale $\{\bot,\top\}$ of truth values. Indeed, we define
\[\bot\cdot m:=\bot_N \quad\text{ and }\quad\top\cdot m:=m,\]
for every $m\in N$. It is easy to see that this makes $N$ into a $\{\bot,\top\}$-module.
\end{enumerate}
\end{example}

\begin{definition}
    A \emph{homomorphism of modules} between $Q$-modules $M$ and $N$ is a function $f\:M\to N$ such that
\begin{enumerate}
\item $f$ preserves arbitrary joins;

\item $f$ is equivariant, \textit{i.e.},\ for every $\alpha\in Q$ and $m\in M$ we have
\[f(\alpha\cdot m)=\alpha\cdot f(m).\]
\end{enumerate}
\end{definition}

Homomorphisms of modules are in a sense linear maps, thinking about the join as a sum and about $\cdot$ as a scalar multiplication. 

A \emph{submodule} of a $Q$-module $M$  is a subset of $M$ which is closed under arbitrary joins and scalar multiplication $\cdot$.
Submodules of a $Q$-module $M$ are automatically complete lattices and thus $Q$-modules. Furthermore, submodules are equivalently given by injective homomorphisms of modules. Notice that every submodule $N\subseteq M$ needs to contain $\bot_M$, since it is the empty join, and we have $\bot_N=\bot_M$.

The following proposition shows how to construct  $Q$-submodules generated by an element of a $Q$-module. 

\begin{proposition}
Let $M$ be a $Q$-module and let $m\in M$. The submodule $\langle m \rangle$ generated by $m$, i.e.,\ the smallest submodule of $M$ containing $m$, is given by
\[Q\cdot m=\set{\alpha\cdot m}{\alpha\in Q}.\]
\end{proposition}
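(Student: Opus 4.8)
The plan is to verify directly the two things the phrase ``smallest submodule containing $m$'' requires: that $Q\cdot m$ is itself a submodule containing $m$, and that it sits inside every submodule of $M$ that contains $m$.

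First I would check that $Q\cdot m$ is a submodule of $M$. Closure under arbitrary joins is immediate from the fact that $\cdot$ preserves joins in its first component: for any family $(\alpha_i)_{i\in I}$ of elements of $Q$,
\[
\bigvee_{i\in I}(\alpha_i\cdot m)=\Bigl(\bigvee_{i\in I}\alpha_i\Bigr)\cdot m\in Q\cdot m.
\]
In particular, taking $I=\emptyset$ gives $\bot_M=\bot_Q\cdot m\in Q\cdot m$, as already noted before the proposition. Closure under scalar multiplication follows from axiom~(2) of a $Q$-module: for $\beta\in Q$ and $\alpha\cdot m\in Q\cdot m$ we have $\beta\cdot(\alpha\cdot m)=(\beta\ast\alpha)\cdot m\in Q\cdot m$. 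Then $Q\cdot m$ contains $m$ because $e_Q\cdot m=m$ by axiom~(3).

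Finally I would prove minimality. Let $N$ be any submodule of $M$ with $m\in N$. Since $N$ is closed under scalar multiplication, $\alpha\cdot m\in N$ for every $\alpha\in Q$, so $Q\cdot m\cont N$. Combining this with the previous paragraph shows that $Q\cdot m$ is the smallest submodule of $M$ containing $m$, that is, $\langle m\rangle=Q\cdot m$.

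I do not expect any real obstacle here: everything reduces to the module axioms and the join-preservation of $\cdot$. The only point deserving a moment of care is the empty-family case in the closure-under-joins step, and this is handled by the observation that $\bot_Q\cdot m=\bot_M$.
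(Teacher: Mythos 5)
Your proof is correct and follows essentially the same approach as the paper's: show $Q\cdot m$ is a submodule using join-preservation of $\cdot$ and axiom (2), then observe minimality via closure of submodules under scalar multiplication. You add the explicit observations that $m\in Q\cdot m$ via $e_Q\cdot m=m$ and that the empty-join case gives $\bot_M$, which the paper leaves implicit, but the argument is the same.
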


\begin{proof}
Of course, $Q\cdot m$ needs to be contained in every submodule of $M$ which contains $m$, since submodules are closed under scalar multiplication. So, it suffices to prove that $Q\cdot m$ is a submodule of $M$. Since $\cdot\:Q\times M\to M$ preserves arbitrary joins, we have
\[\bigvee (\alpha_i\cdot m)=(\bigvee \alpha_i)\cdot m\in Q\cdot m.\]
Moreover $Q\cdot m$ is closed under scalar multiplication, as for every $\beta,\alpha\in Q$
\[\beta\cdot (\alpha\cdot m)=(\beta\ast \alpha)\cdot m\in Q\cdot m.\] Thus, $Q\cdot m$ is the submodule of $M$.
\end{proof}

\begin{remark}
In particular, notice that $\langle \bot_M \rangle=\{\bot_M\}$. We will also denote this submodule of $M$ as $0$.

The image $\Im{f}$ of a homomorphism of $Q$-modules $f\:M\to M'$  is a submodule of $M'$. Also, the kernel $\operatorname{Ker}(f)$ of a homomorphism of $Q$-modules $f\:M\to M'$  is a submodule of $M$.
\end{remark}

Aiming at the concept of \emph{primitive quantale}, we need to introduce notions of simple and of faithful quantales.

\begin{definition}
A $Q$-module $M$  is called \emph{simple} if the only submodules of $M$ are the zero submodule $0$ and the whole $M$.
\end{definition}

\begin{proposition}
Let $M$ be a $Q$-module. Then the following are equivalent:
\begin{enumerate}
\item $M$ is simple;

\item For every $m\in M$ with $m\neq \bot_M$, we have $\langle m \rangle=M$.
\end{enumerate}
\end{proposition}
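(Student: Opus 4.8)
The plan is to prove the two implications separately, and both directions should be fairly direct consequences of the previous proposition identifying $\langle m\rangle$ with $Q\cdot m$.

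For the implication $(1)\Rightarrow(2)$: assume $M$ is simple and take any $m\in M$ with $m\neq\bot_M$. By definition, $\langle m\rangle$ is a submodule of $M$, so by simplicity it equals either $0=\{\bot_M\}$ or the whole $M$. Since $m\in\langle m\rangle$ (indeed $m=e_Q\cdot m\in Q\cdot m$ by the previous proposition) and $m\neq\bot_M$, we cannot have $\langle m\rangle=\{\bot_M\}$. Hence $\langle m\rangle=M$, as required.

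For the implication $(2)\Rightarrow(1)$: suppose every $m\neq\bot_M$ generates all of $M$, and let $N$ be a submodule of $M$ with $N\neq 0$. I want to show $N=M$. Since $N\neq 0=\{\bot_M\}$, there exists some $m\in N$ with $m\neq\bot_M$. Then $\langle m\rangle=M$ by hypothesis. But $N$ is a submodule containing $m$, and $\langle m\rangle$ is by definition the smallest such submodule, so $M=\langle m\rangle\subseteq N\subseteq M$, giving $N=M$. Therefore the only submodules of $M$ are $0$ and $M$, i.e., $M$ is simple.

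I do not anticipate any real obstacle here: the argument is essentially a repackaging of the characterization $\langle m\rangle=Q\cdot m$ together with the two facts that $m\in\langle m\rangle$ and that $\langle m\rangle$ is minimal among submodules containing $m$. The only point requiring a moment's care is making sure that $\langle m\rangle=\{\bot_M\}$ really forces $m=\bot_M$ — but this is immediate since $m\in\langle m\rangle$ — and, dually, that a nonzero submodule genuinely contains some element other than $\bot_M$, which holds because $0=\{\bot_M\}$ by definition.
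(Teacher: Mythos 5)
Your argument is correct and coincides with the paper's proof: the paper calls $(1)\Rightarrow(2)$ trivial and proves $(2)\Rightarrow(1)$ exactly as you do, by picking a nonzero element $n$ of a nonzero submodule $N$ and using $\langle n\rangle = M \subseteq N$. You simply spell out the easy direction a bit more explicitly.
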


\begin{proof}
The proof of (1)$\Rightarrow$(2) is trivial. To prove (2)$\Rightarrow$(1), let $N\subseteq M$ be a submodule of $M$. If $N$ is not the zero submodule $0$, then it contains an element $n\neq \bot_M$ and it thus also contain the submodule $\langle n \rangle=M$. Therefore, $N=M$.
\end{proof}

\begin{definition}
Let $M$ be a $Q$-module. 
\begin{enumerate}
\item[$\bullet$] $M$ is called \emph{faithful} if for every $\alpha\in Q$ such that $\alpha \cdot m= \bot_M$ for every $m\in M$, we have that $\alpha=\bot_Q$. 

\item[$\bullet$] $M$ is called \emph{strongly faithful} if for every $\alpha,\beta\in Q$ such that $\alpha \cdot m= \beta \cdot m$ for every $m\in M$, we have that $\alpha=\beta$.  
\end{enumerate}
\end{definition}

\begin{remark} \label{weaklyfaithful}
The property of being a strongly faithful $Q$-module  is stronger than the property of being faithful. Indeed, the latter is given by the former taking $\beta=\bot_Q$.
\end{remark}

\begin{proposition}\label{propfaithfulann}
Let $M$ be a $Q$-module. The following facts are equivalent:
\begin{enumerate}
\item $M$ is faithful;

\item The homomorphism of quantales $[\cdot]\:Q\to \End{M}$ associated to the module $M$ has trivial kernel.
\end{enumerate}
\end{proposition}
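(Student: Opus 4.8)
The plan is to unwind both conditions to the same elementary statement about the action $\cdot$, after which the equivalence is immediate. The key preliminary step is to describe explicitly the bottom element $\bot_{\End{M}}$ of the quantale $\End{M}$ of Remark~\ref{remend}. Since joins in $\End{M}$ are computed pointwise, the empty join $\bot_{\End{M}}$ is the function $M\to M$ constantly equal to $\bot_M$; I would note in passing that this constant function is indeed join-preserving, so it genuinely belongs to $\End{M}$. It follows that, for $\alpha\in Q$, the equality $[\cdot](\alpha)=\bot_{\End{M}}$ holds if and only if $[\cdot](\alpha)(m)=\bot_M$ for every $m\in M$, that is, if and only if $\alpha\cdot m=\bot_M$ for every $m\in M$.

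Next I would fix the meaning of ``trivial kernel''. Understanding the kernel of a quantale homomorphism $h\colon Q\to Q'$ as the preimage $\operatorname{Ker}(h)=\set{\alpha\in Q}{h(\alpha)=\bot_{Q'}}$, with triviality meaning $\operatorname{Ker}(h)=\{\bot_Q\}$, the previous step identifies $\operatorname{Ker}([\cdot])$ with $\set{\alpha\in Q}{\alpha\cdot m=\bot_M \text{ for all } m\in M}$. Since $[\cdot]$ preserves arbitrary joins, in particular the empty one, we always have $[\cdot](\bot_Q)=\bot_{\End{M}}$, so $\bot_Q\in\operatorname{Ker}([\cdot])$ and hence $\{\bot_Q\}\cont\operatorname{Ker}([\cdot])$; thus the kernel is trivial exactly when the reverse inclusion $\operatorname{Ker}([\cdot])\cont\{\bot_Q\}$ also holds.

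Finally I would compare with the definition of faithfulness: $M$ is faithful precisely when every $\alpha\in Q$ with $\alpha\cdot m=\bot_M$ for all $m\in M$ equals $\bot_Q$, which is literally the inclusion $\operatorname{Ker}([\cdot])\cont\{\bot_Q\}$. Combining this with the always-valid inclusion $\{\bot_Q\}\cont\operatorname{Ker}([\cdot])$, faithfulness of $M$ is equivalent to $\operatorname{Ker}([\cdot])=\{\bot_Q\}$, i.e.\ to $[\cdot]$ having trivial kernel, which yields both implications at once. I do not expect any genuine obstacle here: the argument is entirely definitional, and the only point requiring a moment's care is the explicit identification of $\bot_{\End{M}}$ with the constant-$\bot_M$ function, together with the remark that this function indeed lies in $\End{M}$.
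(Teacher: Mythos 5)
Your proof is correct and takes essentially the same route as the paper, whose entire proof reads ``Follows from the definition''; you have simply spelled out the definitional unwinding the authors leave implicit. In particular, your explicit identification of $\bot_{\End{M}}$ with the constant-$\bot_M$ map, together with the observation that $\bot_Q$ always lies in the kernel because $[\cdot]$ preserves the empty join, is exactly the content being invoked.
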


\begin{proof}
Follows from the definition.
\end{proof}

\begin{definition}
Let $M$ be a $Q$-module. The \emph{annihilator} of $M$ is the kernel of the homomorphism of quantales $[\cdot]\:Q\to \End{M}$ associated to the module $M$.
\end{definition}

\begin{remark}\label{remfaithfulann}
We have thus proved that $M$ is a faithful $Q$-module  if and only if its annihilator is the zero subquantale of $Q$.
Strong faithfulness corresponds to the request that the homomorphism of quantales $[\cdot]\:Q\to \End{M}$ is injective. Since quantales lack differences, in the sense that the join operation $\vee$ of a quantale has no inverses, kernels of homomorphisms of quantales do not fully capture injectivity, unlike what happens in the classical ring theory.
We shall face this obstacle  throughout the paper.
\end{remark}

\section{Primitive quantales}\label{pqt}

The aim of this section is to introduce the concept of primitive quantales. We prove that every primitive ring gives rise to a primitive quantale of ideals. We then show how analogues of the fundamental properties of primitive rings hold in our context as well.

\begin{definition}
A quantale $Q$ is called $\emph{primitive}$ if there exists a faithful simple $Q$-module and
$Q$ is called \emph{strongly primitive} if there exists a strongly faithful simple $Q$-module.
\end{definition}

The following results show how the theory of primitive quantales extends the theory of primitive rings. It is well-known that every ring gives rise to a quantale of ideals. We show that an analogous construction can be performed on modules.

\begin{theorem}\label{theorsubmodules}
Let $M$ be an $R$-module. Then the set $\Sub{M}$ of submodules of $R$ is a module over the quantale $\Idl{R}$ of ideals of $R$. Moreover,
\begin{enumerate}
\item if $M$ is a faithful $R$-module, then $\Sub{M}$ is a faithful module over $\Idl{R}$.

\item If $M$ is a simple $R$-module, then $\Sub{M}$ is a simple module over $\Idl{R}$.
\end{enumerate}
\end{theorem}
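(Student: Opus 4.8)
The plan is to equip $\Sub{M}$ with the action $\Idl{R}\times\Sub{M}\to\Sub{M}$ sending $(I,N)$ to the submodule $IN$ generated by $\set{an}{a\in I,\ n\in N}$; since $N$ is a submodule and $I$ an ideal, $IN$ is just the set of finite sums $\sum a_in_i$ with $a_i\in I$, $n_i\in N$, which is already a submodule of $M$. First I would record that $\Sub{M}$ is a complete lattice, with arbitrary meets given by intersections of submodules and arbitrary joins $\bigvee_k N_k$ given by the submodules $\sum_k N_k$. Then I would verify the three module axioms, all of which are routine: preservation of arbitrary joins in each variable amounts to $(\sum_k I_k)N=\sum_k(I_kN)$ and $I(\sum_k N_k)=\sum_k(IN_k)$, which hold because every element of a sum of ideals or of submodules lies in a finite subsum; the compatibility $(I\ast J)\cdot N=I\cdot(J\cdot N)$ is the classical identity $(IJ)N=I(JN)$; and $R\cdot N=N$ since $N=1\cdot N\cont RN\cont N$, the last inclusion because $N$ is a submodule.

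For part (1), suppose $I\in\Idl{R}$ satisfies $I\cdot N=0$ for every $N\in\Sub{M}$, where $0=\{\bot_M\}$ denotes the zero submodule. Taking $N=M$ gives $IM=\{\bot_M\}$, i.e.\ $am=\bot_M$ for all $a\in I$ and all $m\in M$; faithfulness of $M$ as an $R$-module then forces every such $a$ to be $0$, so $I$ is the zero ideal, i.e.\ $\bot_{\Idl{R}}$. Hence $\Sub{M}$ is a faithful $\Idl{R}$-module.

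For part (2), I would invoke the characterization of simplicity established above: it suffices to show that $\Idl{R}\cdot X=\Sub{M}$ for every $X\in\Sub{M}$ with $X\neq\{\bot_M\}$. Such an $X$ is a nonzero $R$-submodule of $M$, hence $X=M$ by simplicity of $M$; and simplicity of $M$ also shows that $\Sub{M}=\{0,M\}$ is a two-element lattice. Since $0\cdot M=0$ and $R\cdot M=M$, we obtain $\Idl{R}\cdot M=\{0,M\}=\Sub{M}$, as required. I do not expect a genuine obstacle here: the whole content is that the classical operation $(I,N)\mapsto IN$ satisfies the quantale-module axioms, together with the observation that a simple module has exactly two submodules. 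The only points needing slight care are that $IN$ really is a submodule and that the action preserves \emph{arbitrary} (not merely finite) joins in each variable, both of which reduce to the finiteness of the sums representing elements of joins of submodules and ideals.
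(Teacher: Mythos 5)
Your proof is correct and takes essentially the same approach as the paper: the same scalar multiplication $(I,N)\mapsto IN$, the same lattice structure on $\Sub{M}$ with joins given by sums, and the same observation that $\Sub{M}=\{0,M\}$ when $M$ is simple. The only cosmetic difference is in part (1), where you take $N=M$ directly, while the paper picks a nonzero $a\in I$, an $m$ with $am\neq 0$, and uses $N=\langle m\rangle$; your choice is a slight streamlining of the same idea.
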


\begin{proof}
It is well-known that $\Idl{R}$ is a quantale. We prove that the set $\Sub{M}$ of submodules of $M$ over $R$ can be equipped with a structure of module over $\Idl{R}$. Notice that $\Sub{M}$ is a complete lattice, with the joins being sums of submodules and the meets given by intersections. We can then define a scalar multiplication $\cdot\:\Idl{R}\times \Sub{M}\to \Sub{M}$ sending a pair $(I,N)$ with $I$ an ideal of $R$ and $N$ a submodule of $N$ to the multiplication $I\cdot N\subseteq M$. It is straightforward to see that $\cdot$ preserves (arbitrary) joins and that $(I\cdot J)\cdot N=I\cdot (J\cdot N)$ for  ideals $I,J$ of $R$ and $N$ submodule of $M$. Furthermore, since $R$ is unitary, we have $R\cdot N=N$. From these, we conclude that $\Sub{M}$ is a module over $\Idl{R}$.

To show (1), assume now that $M$ is a faithful $R$-module. We prove that if $I$ is a non-zero ideal of $R$, then there exists a submodule $N$ such that $I\cdot N\neq 0$. Let $a\in I$ with $a\neq 0$. Then, since $M$ is faithful, there exists $m\in M$ such that $a\cdot m\neq 0_M$. As a consequence, the submodule $N=\langle m \rangle$ of $M$ generated by $m$ is such that $I\cdot N\neq 0$. We thus conclude that $\Sub{M}$ is a faithful module over $\Idl{R}$.

Finally, for (2), if $M$ is a simple $R$-module, then $\Sub{M}=\{0,M\}$ is clearly a simple module over $\Idl{R}$.
\end{proof}

\begin{corollary}\label{corollprimitiverings}
Let $R$ be a primitive ring. Then the quantale $\Idl{R}$ of ideals of $R$ is primitive.
\end{corollary}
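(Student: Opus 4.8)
The statement is an immediate consequence of Theorem \ref{theorsubmodules}, so the ``proof'' is really just the assembly of that theorem's two conclusions. The plan is to unwind the definition of primitive ring and feed the resulting module into Theorem \ref{theorsubmodules}.

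By definition, a ring $R$ is primitive exactly when it possesses a faithful simple (left) $R$-module $M$; note that, $R$ being a nonzero ring, faithfulness of $M$ forces $M\neq 0$. First I would invoke Theorem \ref{theorsubmodules} for this $M$, which tells us that $\Sub{M}$ carries the structure of a module over the quantale $\Idl{R}$. Then part (1) of that theorem, applied to the faithful module $M$, yields that $\Sub{M}$ is a faithful module over $\Idl{R}$, and part (2), applied to the simple module $M$, yields that $\Sub{M}$ is a simple module over $\Idl{R}$ (explicitly, $\Sub{M}=\{0,M\}$, a genuine two-element lattice since $M\neq 0$). Hence $\Idl{R}$ admits a faithful simple module, which is precisely what it means for the quantale $\Idl{R}$ to be primitive.

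There is essentially nothing further to do: all the content lives in Theorem \ref{theorsubmodules}, and this corollary is just the observation that both of its hypotheses hold at once when one starts from a faithful simple module witnessing the primitivity of $R$. Consequently I do not anticipate any real obstacle; the only point deserving a moment's care is recording that the witnessing module $M$ is honestly nonzero, so that $\Sub{M}$ is the expected simple $\Idl{R}$-module and not the degenerate zero one.
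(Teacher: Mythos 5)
Your proof is correct and follows exactly the same route as the paper: take a faithful simple $R$-module $M$ witnessing primitivity of $R$, and apply both parts of Theorem \ref{theorsubmodules} to conclude that $\Sub{M}$ is a faithful simple $\Idl{R}$-module. The extra remark that $M\neq 0$ (so that $\Sub{M}$ is genuinely two-element) is a sensible bit of hygiene, but the argument is otherwise identical to the paper's.
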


\begin{proof}
Let $R$ be a primitive ring and $M$ be a faithful simple $R$-module. Then by Theorem \ref{theorsubmodules}, the set $\Sub{M}$ of submodules of $M$ is a faithful simple module over the quantale $\Idl{R}$, and thus, $\Idl{R}$ is a primitive quantale.
\end{proof}

\begin{remark}
Observe that Corollary \ref{corollprimitiverings} provides numerous examples of primitive quantales. Every primitive ring gives rise to a primitive quantale of ideals.
Furthermore, Corollary \ref{corollprimitiverings} also shows how the theory explored in this paper extends the classical theory of primitive rings.
\end{remark}

We now prove that analogues of the fundamental properties of primitive rings hold in our context. For that, we start with a definition.

\begin{definition}\label{defprime}
A quantale $Q$ is called \emph{prime} if for every $\alpha,\beta\in Q$ such that $\alpha\ast \gamma\ast \beta=\bot_Q$ for every $\gamma\in Q$, we have that either $\alpha=\bot_Q$ or $\beta=\bot_Q$.
\end{definition}

\begin{proposition}
Every primitive quantale is prime.
\end{proposition}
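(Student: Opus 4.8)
The plan is to transport the classical ring-theoretic argument that primitive rings are prime, using that the notion of primeness in Definition \ref{defprime} is phrased in the ``annihilator'' form $\alpha\ast\gamma\ast\beta=\bot_Q$ rather than any form involving differences, and that faithfulness of a module is likewise phrased in annihilator form. So let $Q$ be primitive and fix a faithful simple $Q$-module $M$ witnessing this.

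Suppose $\alpha,\beta\in Q$ satisfy $\alpha\ast\gamma\ast\beta=\bot_Q$ for every $\gamma\in Q$. Assume $\beta\neq\bot_Q$; I will show $\alpha=\bot_Q$. Since $M$ is faithful, there exists $m\in M$ with $\beta\cdot m\neq\bot_M$. By the characterization of simple modules, the submodule $\langle\beta\cdot m\rangle=Q\cdot(\beta\cdot m)$ generated by the nonzero element $\beta\cdot m$ equals all of $M$. Hence every $n\in M$ can be written as $n=\gamma\cdot(\beta\cdot m)$ for some $\gamma\in Q$.

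Now for such an $n$, using associativity of $\ast$ and the module axiom $(\delta\ast\varepsilon)\cdot x=\delta\cdot(\varepsilon\cdot x)$ twice, together with the hypothesis and the fact that $\bot_Q\cdot m=\bot_M$ in any $Q$-module, we compute
\[
\alpha\cdot n=\alpha\cdot\bigl(\gamma\cdot(\beta\cdot m)\bigr)=(\alpha\ast\gamma\ast\beta)\cdot m=\bot_Q\cdot m=\bot_M.
\]
Since $n\in M$ was arbitrary, $\alpha\cdot n=\bot_M$ for all $n\in M$, and faithfulness of $M$ forces $\alpha=\bot_Q$. Therefore either $\alpha=\bot_Q$ or $\beta=\bot_Q$, so $Q$ is prime.

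I do not expect a genuine obstacle here: the argument is short and each step is a direct application of an earlier result (the generation of $\langle\beta\cdot m\rangle$ as $Q\cdot(\beta\cdot m)$, the simple-module characterization, and the definition of faithful). The only point worth a moment's care is that we never need to ``subtract'', precisely because both the conclusion we want and the faithfulness hypothesis are stated as annihilation conditions — this is exactly the kind of formulation that sidesteps the lack of differences flagged in Remark \ref{remfaithfulann}.
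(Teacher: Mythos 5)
Your proof is correct and takes essentially the same approach as the paper: both exploit faithfulness to find $m$ with $\beta\cdot m\neq\bot_M$, use simplicity to write every element of $M$ as $\gamma\cdot(\beta\cdot m)$, and pass the hypothesis through associativity. The paper argues by contraposition (exhibiting a $\gamma$ with $\alpha\ast\gamma\ast\beta\neq\bot_Q$) while you argue directly that $\alpha$ annihilates all of $M$, but these are the same argument rearranged logically.
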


\begin{proof}
Let $Q$ be a primitive quantale and let $\alpha,\beta\in Q$ such that $\alpha\neq \bot_Q\neq \beta$. We prove that there exists some $\gamma\in Q$ such that $\alpha\ast\gamma\ast\beta\neq \bot_Q$. Let $M$ be a faithful simple module over $Q$. Since $M$ is faithful, there exists $m\in M$ such that $\alpha\cdot m\neq \bot_M$ and $n\in M$ such that $\beta\cdot n\neq \bot_M$. Since $M$ is simple,
\[Q\cdot (\beta\cdot n)=\langle \beta\cdot n \rangle=M.\]
So, there exists $\gamma\in Q$ such that $m=\gamma\cdot (\beta\cdot n)$. Then
\[(\alpha\ast \gamma\ast \beta)\cdot n=\alpha\cdot (\gamma\cdot (\beta\cdot n))=\alpha\cdot m\neq \bot_M,\]
whence $\alpha\ast\gamma\ast\beta\neq \bot_Q$.
\end{proof}

\begin{definition}
Let $Q$ be a quantale. A \emph{left ideal} of $Q$ is a submodule of $Q$ seen as a $Q$-module.
Equivalently, a left ideal of $Q$ is a subset $I$ of $Q$ which is closed under arbitrary joins and is such that for every $\alpha\in Q$ and $\iota\in I$ we have that $\alpha\ast \iota\in I$.
\end{definition}

\begin{proposition}
Every prime quantale with a minimal left ideal is primitive.
\end{proposition}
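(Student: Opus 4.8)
The plan is to take the minimal left ideal $I$ itself as the witnessing module: I would show that $I$, regarded as a $Q$-module, is both simple and faithful, and then conclude primitivity directly from the definition.

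First I would observe that a minimal left ideal $I$ is a simple $Q$-module. By definition a left ideal of $Q$ is a submodule of $Q$ seen as a $Q$-module, and a submodule of $I$ (with the $Q$-module structure inherited from $Q$) is precisely a left ideal of $Q$ contained in $I$. Since $I$ is minimal among the nonzero left ideals, the only such submodules are $0$ and $I$, and $I \neq 0$; hence $I$ has no proper nonzero submodules, i.e.\ $I$ is simple. I would also record here that $\bot_I = \bot_Q$, since $I$ is a submodule of $Q$ and bottom elements of submodules agree with the ambient one.

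Next I would prove that $I$ is faithful, which is the step where primeness is used. Suppose $\alpha \in Q$ lies in the annihilator of $I$, that is, $\alpha \cdot \iota = \alpha \ast \iota = \bot_Q$ for every $\iota \in I$. Fix some $\iota_0 \in I$ with $\iota_0 \neq \bot_Q$, which exists because $I \neq 0$. For every $\gamma \in Q$ we have $\gamma \ast \iota_0 \in I$ since $I$ is a left ideal, and therefore $\alpha \ast \gamma \ast \iota_0 = \alpha \ast (\gamma \ast \iota_0) = \bot_Q$. As $Q$ is prime, this forces $\alpha = \bot_Q$ or $\iota_0 = \bot_Q$, and since $\iota_0 \neq \bot_Q$ we get $\alpha = \bot_Q$. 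Thus the annihilator of $I$ is the zero subquantale, so $I$ is faithful.

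Putting the two steps together, $I$ is a faithful simple $Q$-module, hence $Q$ is primitive. I do not anticipate a serious obstacle: the only points needing a little care are the identification of submodules of $I$ with left ideals of $Q$ contained in $I$ (so that minimality really yields simplicity) and the small bookkeeping that $\bot_I = \bot_Q$. The one genuinely nontrivial idea is the standard trick of inserting an arbitrary $\gamma \in Q$ so as to rewrite the annihilation condition in the form $\alpha \ast \gamma \ast \beta = \bot_Q$ required by the definition of a prime quantale.
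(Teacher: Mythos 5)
Your proof is correct and follows essentially the same route as the paper's: take the minimal left ideal $I$ as the candidate module, observe that minimality makes it simple, and use primeness with the insertion of an arbitrary $\gamma$ to show faithfulness. The only cosmetic difference is that you explicitly record the bookkeeping fact $\bot_I = \bot_Q$, which the paper leaves implicit.
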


\begin{proof}
Let $Q$ be a prime quantale with a minimal left ideal $I$. Then $I$ is a $Q$-module. Moreover, minimality of $I$ guarantees that $I$ is a simple $Q$-module. Indeed, every submodule of $I$ needs to be a submodule of $Q$ as well, and thus, an ideal of $Q$ contained in $I$. It remains to prove that $I$ is a faithful $Q$-module. Let $\alpha\in Q$ such that for every $\iota\in I$ we have $\alpha\ast \iota=\bot_Q$. We show that $\alpha=\bot_Q$. Let $\lambda\in I$ with $\lambda\neq \bot_Q$ (one such $\lambda$ exists because $I$ is a minimal left ideal). For every $\gamma\in Q$, we need to have that
\[\alpha\ast \gamma\ast \lambda=\bot_Q,\]
since $\gamma\ast \lambda\in I$. Since $Q$ is prime, we can then conclude that either $\alpha=\bot_Q$ or $\lambda=\bot_Q$. But we have assumed $\lambda\neq \bot_Q$, so $\alpha=\bot_Q$.   
\end{proof}

\begin{definition}
A \emph{division quantale} is a quantale $Q$ such that for every $\alpha\in Q$ with $\alpha\neq \bot_Q$, there exists $\alpha^{-1}\in Q$ such that $\alpha\ast \alpha^{-1}=e=\alpha^{-1}\ast \alpha$. That is, every non-zero element has a (two-sided) inverse with respect to the product $\ast$.  

A \emph{field quantale} is a commutative division quantale.
\end{definition}

\begin{example}\label{exatruthvaluesfield}
The quantale of truth values $\{\bot,\top\}$ is a field quantale. Indeed, it is commutative because the product is given by conjunctions. And $\top$ is invertible with respect to the conjunction, with its inverse being itself.
\end{example}

\begin{proposition}
Every commutative, strongly primitive quantale is a field quantale.
\end{proposition}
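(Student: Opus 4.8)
The plan is to reduce everything to showing that $Q$, viewed as a module over itself, is simple, and then read off invertibility of nonzero elements. Since a field quantale is by definition a commutative division quantale and $Q$ is already assumed commutative, it suffices to produce, for each $\alpha\in Q$ with $\alpha\neq\bot_Q$, an element $\gamma$ with $\gamma\ast\alpha=e$; commutativity then makes $\gamma$ a two-sided inverse. And if $Q$ is simple as a $Q$-module, this is immediate: $Q\ast\alpha=\set{\gamma\ast\alpha}{\gamma\in Q}$ is a left ideal of $Q$ containing $\alpha=e\ast\alpha\neq\bot_Q$, hence a nonzero submodule of $Q$, hence all of $Q$, so $e\in Q\ast\alpha$.

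It then remains to see why strong primitivity forces $Q$ to be simple over itself. Let $M$ be a strongly faithful simple $Q$-module. First, $M\neq 0$, since otherwise faithfulness would force every element of $Q$ to be $\bot_Q$, contradicting non-triviality of $Q$. Fix $m_0\in M$ with $m_0\neq\bot_M$; by simplicity $\langle m_0\rangle=Q\cdot m_0=M$. I would then consider the evaluation homomorphism of $Q$-modules
\[\pi\colon Q\to M,\qquad \pi(\beta)=\beta\cdot m_0,\]
whose image is the submodule $\langle m_0\rangle=M$, so $\pi$ is surjective. The key claim is that $\pi$ is also injective. Suppose $\beta_1\cdot m_0=\beta_2\cdot m_0$, and take an arbitrary $m\in M$; writing $m=\gamma\cdot m_0$ (possible since $\langle m_0\rangle=M$) and using commutativity of $\ast$ to move $\gamma$ past $\beta_i$, one computes $\beta_1\cdot m=(\beta_1\ast\gamma)\cdot m_0=\gamma\cdot(\beta_1\cdot m_0)=\gamma\cdot(\beta_2\cdot m_0)=\beta_2\cdot m$. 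Thus $\beta_1\cdot m=\beta_2\cdot m$ for every $m\in M$, and strong faithfulness yields $\beta_1=\beta_2$.

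Hence $\pi$ is a bijective, join-preserving homomorphism of $Q$-modules. A bijective join-preserving map between complete lattices is an order-isomorphism, so $\pi^{-1}$ is again join-preserving, and it is equivariant because $\pi$ is; therefore $\pi$ is an isomorphism of $Q$-modules. Consequently the submodule lattices of $Q$ and of $M$ are isomorphic, so simplicity of $M$ transfers to $Q$: the only left ideals of $Q$ are $0$ and $Q$. Combined with the first paragraph, this shows every nonzero element of $Q$ is invertible, so $Q$ is a division quantale, and being commutative, a field quantale.

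The step I expect to be the real obstacle is the injectivity of $\pi$. This is precisely where the ``lack of differences'' phenomenon recorded in Remark~\ref{remfaithfulann} would ordinarily block the argument — a homomorphism of quantales with trivial kernel need not be injective — and it is the reason strong faithfulness (rather than plain faithfulness) is the right hypothesis. What makes it go through is the interplay with commutativity: it lets us upgrade the single equality $\beta_1\cdot m_0=\beta_2\cdot m_0$ at the cyclic generator $m_0$ to the equality $\beta_1\cdot m=\beta_2\cdot m$ at every $m\in M$, at which point strong faithfulness applies directly.
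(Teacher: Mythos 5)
Your argument is correct, and its engine is the same as the paper's: the crucial step is that the evaluation map $\pi\colon\beta\mapsto\beta\cdot m_0$ is injective, proved by using commutativity to upgrade the single equality $\beta_1\cdot m_0=\beta_2\cdot m_0$ at the cyclic generator to $\beta_1\cdot m=\beta_2\cdot m$ at every $m\in M$, whereupon strong faithfulness applies. Where you diverge is in how you extract inverses. You observe that $\pi$ is in fact an isomorphism of $Q$-modules --- a bijective join-preserving map between complete lattices reflects order (if $\pi(\beta_1)\leqslant\pi(\beta_2)$ then $\pi(\beta_1\vee\beta_2)=\pi(\beta_2)$, hence $\beta_1\leqslant\beta_2$ by injectivity), so $\pi^{-1}$ is join-preserving, and equivariance transfers --- and thus transport simplicity of $M$ to $Q$ itself, reading off $e\in Q\ast\alpha$ directly. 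The paper is more economical: it applies the injectivity claim a second time, to $(\delta\ast\alpha)\cdot m=m=e\cdot m$ where $\delta$ satisfies $\delta\cdot(\alpha\cdot m)=m$ by simplicity of $M$, and concludes $\delta\ast\alpha=e$ without passing through the module isomorphism. Your route has the merit of making explicit a structural fact implicit in the paper: every strongly faithful simple module over a commutative quantale is isomorphic, as a module, to the quantale itself.
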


\begin{proof}
Let $Q$ be a commutative strongly primitive quantale and let $M$ be a strongly faithful simple $Q$-module. Consider  an  $m\in M$ with $m\neq \bot_M$. Notice that one such $m$ exists because $M$ is faithful. We claim that for every $\alpha,\beta\in Q$, if $\alpha\cdot m=\beta\cdot m$ then $\alpha=\beta$. Indeed, assume $\alpha\cdot m=\beta\cdot m$, and let $n\in M$ arbitrary. We show that also $\alpha\cdot n=\beta\cdot n$ holds. Since $M$ is simple, $Q\cdot m=\langle m \rangle =M$, and thus, there exists $\gamma\in Q$ such that $n=\gamma\cdot m$. Then
\[\alpha\cdot n=\alpha\cdot (\gamma\cdot m)=(\alpha\ast \gamma)\cdot m=(\gamma\ast \alpha)\cdot m=\gamma\cdot (\alpha\cdot m)=\gamma\cdot (\beta\cdot m)=(\gamma\ast \beta)\cdot m=(\beta\ast \gamma)\cdot m=\beta\cdot (\gamma\cdot m)=\beta\cdot n.\]
Notice that we have used the commutativity of $\ast$. Since $M$ is strongly faithful, we conclude that $\alpha=\beta$.

Consider now $\alpha\in Q$ with $\alpha\neq \bot_Q$. We prove that $\alpha$ has a two-sided inverse with respect to $\ast$. Thanks to what we proved above, $\alpha\cdot m\neq \bot_M$, because $\bot_Q\cdot m=\bot_M$ and $\alpha\neq\bot_Q$. Since $M$ is simple, $Q\cdot (\alpha\cdot m)=\langle \alpha\cdot m \rangle=M$. So, there exists $\delta\in Q$ such that $m=\delta\cdot (\alpha\cdot m)$. Then
\[(\delta\ast \alpha)\cdot m=\delta\cdot (\alpha\cdot m)=m=e\cdot m.\]
Using again what we proved above, we conclude that $\delta\ast \alpha=e$. Since $Q$ is commutative, also $\alpha\ast \delta=e$ holds, and hence, $Q$ is a field quantale.
\end{proof}

\section{A  density theorem}\label{dtq}

In this section, we present a density theorem for primitive quantales in the spirit of the fundamental Jacobson Density Theorem from the classical theory of primitive rings.

\begin{proposition}\label{propendqm}
Let $M$ be a $Q$-module. The set $\End[Q]{M}$ of endomorphisms of the $Q$-module $M$ can be endowed with the structure of a quantale, with the product given by composition.

Furthermore, $M$ is a module over the quantale $\End[Q]{M}$, with the scalar multiplication given by evaluation.
\end{proposition}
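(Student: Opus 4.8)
The plan is to verify the quantale axioms for $\End[Q]{M}$ by reusing the structure already established in Remark~\ref{remend} for $\End{N}$ on the underlying complete lattice, and then to check compatibility with the $Q$-action. First I would observe that $\End[Q]{M}$ is a subset of $\End{M}$: a $Q$-module endomorphism is in particular a join-preserving function $M\to M$. So it suffices to show that $\End[Q]{M}$ is a subquantale of the quantale $\End{M}$ from Remark~\ref{remend} — that is, that it contains $\id{M}$, is closed under arbitrary (pointwise) joins, and is closed under composition. The identity is trivially equivariant. For closure under joins, if each $f_i$ is equivariant then $\left(\bigvee f_i\right)(\alpha\cdot m)=\bigvee\left(f_i(\alpha\cdot m)\right)=\bigvee\left(\alpha\cdot f_i(m)\right)=\alpha\cdot\bigvee\left(f_i(m)\right)=\alpha\cdot\left(\left(\bigvee f_i\right)(m)\right)$, where the third equality uses that $\alpha\cdot(-)$ preserves arbitrary joins. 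One must also note that a pointwise join of join-preserving maps is join-preserving, which is exactly what Remark~\ref{remend} already records. Closure under composition is immediate from equivariance of each factor. This makes $\End[Q]{M}$ a subquantale of $\End{M}$, hence a quantale in its own right, with composition as product and $\id{M}$ as identity.

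For the second assertion, I would define the action $\End[Q]{M}\times M\to M$ by $(f,m)\mapsto f(m)$ and check the three module axioms. Preservation of arbitrary joins in the second component is exactly the statement that each $f\in\End[Q]{M}$ is join-preserving; preservation in the first component is the definition of joins in $\End[Q]{M}$ being pointwise, i.e.\ $\left(\bigvee f_i\right)(m)=\bigvee\left(f_i(m)\right)$. The mixed-associativity axiom $(f\ast g)\cdot m=f\cdot(g\cdot m)$ reads $(f\circ g)(m)=f(g(m))$, which holds by definition of composition. The unit axiom is $\id{M}(m)=m$. Hence $M$ is an $\End[Q]{M}$-module.

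I do not expect a genuine obstacle here; the statement is essentially bookkeeping, and the only points requiring a word of care are (i) that the pointwise join of equivariant maps is again equivariant, where one needs the $Q$-action to preserve arbitrary joins in the module component — a defining property of a $Q$-module — and (ii) that joins in the subquantale $\End[Q]{M}$ really are computed as they are in $\End{M}$, which follows because a subquantale is by definition closed under the ambient joins. Everything else is formal. I would therefore present the argument compactly, emphasising the reduction to ``subquantale of $\End{M}$'' and then listing the three module axioms as one-line verifications.
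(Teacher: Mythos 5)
Your proposal is correct and follows essentially the same approach as the paper: exhibit $\End[Q]{M}$ as a subquantale of $\End{M}$ (identity, closure under pointwise joins, closure under composition) and then verify the three module axioms for the evaluation action. You supply a bit more explicit detail than the paper does on the equivariance of pointwise joins, but the structure of the argument is the same.
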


\begin{proof}
By Remark \ref{remend}, the set $\End{M}$ of join-preserving functions $M\to M$ can be endowed with the structure of a quantale, with the joins defined pointwise and the product given by composition. $\End[Q]{M}$ is then a subquantale of $\End{M}$, since it contains the identity of $M$ and it is closed under arbitrary joins and composition.
    
We now prove that $M$ is a module over the quantale $\End[Q]{M}$, with scalar multiplication given by evaluation. That is, we define $f\cdot m$ to be $f(m)\in M$, for every $f\in \End[Q]{M}$ and $m\in M$. Since joins in $\End[Q]{M}$ are defined pointwise and every $f\in\End[Q]{M}$ preserves joins, $\cdot$ preserves arbitrary joins in both components. It is then easy to see that the evaluation function satisfies the required axioms of the module, using that $\ast$ is given by composition. Hence, $M$ is an $\End[Q]{M}$-module.
\end{proof}

For the density theorem, we need a notion of basis of a $Q$-module.

\begin{definition}
Let $M$ be a $Q$-module. A \emph{weak basis} of $M$ (as a $Q$-module) is a family $\{t_i\}_{i\in I}$ of elements of $M$ such that every $m\in M$ can be written in a unique way as an arbitrary join $\bigvee_{j\in J} (\alpha_j\cdot t_j)$ with $J\subseteq I$ and $\alpha_j\in Q$ with $\alpha_j\neq \bot_Q$ for every $j\in J$.

The family
$\{t_i\}_{i\in I}$ is called a \emph{basis} of $M$ (as a $Q$-module) if the joins $\bigvee_{j\in J} (\alpha_j\cdot t_j)$ above are all finite.
\end{definition}

\begin{proposition}
\label{propbasis}
Let $M$ be a $Q$-module with a weak basis $\{t_i\}_{i\in I}$. Every assignment $t_i\mapsto m_i$ of the elements of the weak basis to other elements of $M$ can be extended to a homomorphism of modules $M\to M$.
\end{proposition}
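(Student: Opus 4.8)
The plan is to define the extension explicitly by the only possible formula and then check it is well-defined, join-preserving, and equivariant. Given the weak basis $\{t_i\}_{i\in I}$ and the assignment $t_i\mapsto m_i$, every $m\in M$ has a \emph{unique} expression $m=\bigvee_{j\in J}(\alpha_j\cdot t_j)$ with $J\subseteq I$ and all $\alpha_j\neq\bot_Q$. I would define $f(m):=\bigvee_{j\in J}(\alpha_j\cdot m_j)$. Since the decomposition of $m$ is unique, this is a genuine function $M\to M$; uniqueness is precisely what makes the definition unambiguous, so this is where the ``weak basis'' hypothesis does its essential work. I would also note at the start that $f(t_i)=m_i$: indeed $t_i=e_Q\cdot t_i$ is the canonical decomposition of $t_i$ (with $e_Q\neq\bot_Q$ in a nontrivial quantale), so $f(t_i)=e_Q\cdot m_i=m_i$. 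Likewise $f(\bot_M)=\bot_M$, coming from the empty join.

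Next I would check that $f$ preserves arbitrary joins. Take a family $\{m^{(k)}\}_{k\in K}$ with canonical decompositions $m^{(k)}=\bigvee_{j\in J_k}(\alpha^{(k)}_j\cdot t_j)$. The join $m:=\bigvee_k m^{(k)}$ can be written as $\bigvee_k\bigvee_{j\in J_k}(\alpha^{(k)}_j\cdot t_j)$, but this need not be the \emph{canonical} decomposition of $m$, since a given index $i\in I$ may occur in several $J_k$ and the coefficients must be amalgamated. The key point is that in a $Q$-module, for fixed $t$, the map $\alpha\mapsto\alpha\cdot t$ preserves joins, so $\bigvee_{k:\,i\in J_k}(\alpha^{(k)}_i\cdot t_i)=\big(\bigvee_{k:\,i\in J_k}\alpha^{(k)}_i\big)\cdot t_i$; setting $\beta_i:=\bigvee_{k:\,i\in J_k}\alpha^{(k)}_i$ and $J:=\{i:\beta_i\neq\bot_Q\}$ gives $m=\bigvee_{i\in J}(\beta_i\cdot t_i)$, which \emph{is} the canonical decomposition by uniqueness. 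Applying the same amalgamation on the $m_i$ side, $f(m)=\bigvee_{i\in J}(\beta_i\cdot m_i)=\bigvee_i\bigvee_{k:\,i\in J_k}(\alpha^{(k)}_i\cdot m_i)=\bigvee_k\bigvee_{j\in J_k}(\alpha^{(k)}_j\cdot m_j)=\bigvee_k f(m^{(k)})$, as desired. (One subtlety: some $\beta_i$ may vanish even though individual $\alpha^{(k)}_i$ do not — but then $\beta_i\cdot t_i=\bot_M$ and $\beta_i\cdot m_i=\bot_M$, so dropping those indices changes neither side.)

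Finally I would verify equivariance: $f(\gamma\cdot m)=\gamma\cdot f(m)$ for $\gamma\in Q$. Writing $m=\bigvee_{j\in J}(\alpha_j\cdot t_j)$ canonically and using that $\cdot$ preserves joins and satisfies $(\gamma\ast\alpha_j)\cdot t_j=\gamma\cdot(\alpha_j\cdot t_j)$, we get $\gamma\cdot m=\bigvee_{j\in J}\big((\gamma\ast\alpha_j)\cdot t_j\big)$. As before this is not yet canonical — the coefficients $\gamma\ast\alpha_j$ may vanish, and distinct $j$'s stay distinct so no amalgamation is needed, only the removal of the indices where $\gamma\ast\alpha_j=\bot_Q$. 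Let $J':=\{j\in J:\gamma\ast\alpha_j\neq\bot_Q\}$; then $\gamma\cdot m=\bigvee_{j\in J'}\big((\gamma\ast\alpha_j)\cdot t_j\big)$ is canonical, so $f(\gamma\cdot m)=\bigvee_{j\in J'}\big((\gamma\ast\alpha_j)\cdot m_j\big)=\bigvee_{j\in J}\big((\gamma\ast\alpha_j)\cdot m_j\big)=\gamma\cdot\bigvee_{j\in J}(\alpha_j\cdot m_j)=\gamma\cdot f(m)$, where the second equality reinstates the harmless zero terms.

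The main obstacle is the bookkeeping around canonical decompositions: the naive formula ``write $m$ in terms of the basis and transport'' only makes sense once one is careful that joins of canonical expressions are not canonical, and that coefficients (or their $\gamma$-multiples) can collapse to $\bot_Q$. Handling that amalgamation/truncation cleanly, while invoking uniqueness at exactly the right moment, is the crux; the module axioms then make each individual manipulation routine.
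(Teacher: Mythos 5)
Your proof is correct and follows the same approach as the paper: define $f(m) := \bigvee_{j\in J}(\alpha_j\cdot m_j)$ using the unique decomposition of $m$, and verify it is a homomorphism. The paper states the verification is ``straightforward'' and omits it; your write-up simply supplies the bookkeeping the paper elides. (One small remark: your caution that ``some $\beta_i$ may vanish even though individual $\alpha^{(k)}_i$ do not'' is unnecessary, since $\beta_i$ is a join of nonzero elements and hence dominates each of them; but including the clause is harmless.)
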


\begin{proof}
Every $n\in M$ can be written in a unique way as an (arbitrary) join $\bigvee_{j\in J}(\alpha_j\cdot t_j)$ with $J\cont I$, $\alpha_j\in Q$ and $\alpha_j\neq \bot_Q$, for every $j\in J$. We then send $n$ to $\bigvee_{j\in J}(\alpha_j\cdot m_j)$. It is straightforward to see that this defines a homomorphism of modules $M\to M$ that extends the original assignment.
\end{proof}

\begin{remark}
\label{remlackbasis}
Because of the lack of differences in a quantale, it is not true that every module over a division quantale has a basis, not even a weak basis. Indeed, Example \ref{exalackbasis} below is a counterexample. 
\end{remark}

\begin{remark}
\label{remeverylatticemoduleoverdivision}
By Example \ref{fex}(\ref{exaeverylatticeismodule}), every complete lattice can be seen as a module over the quantale $\{\bot,\top\}$ of truth values. Moreover, by Example \ref{exatruthvaluesfield}, the quantale of truth values is a division quantale (in fact, a field quantale). A basis for a complete lattice $M$, seen as a $\{\bot,\top\}$-module, is a set $\{t_i\}_{i\in I}$ of elements of $M$ such that every $m\in M$ can be written in a unique way as an arbitrary join $\bigvee_{j\in J} t_j$ with $J\subseteq I$.
\end{remark}

\begin{example}\label{exalackbasis}
Consider the complete lattice $\Idl{\mathds{Z}}$ of ideals of the ring of integers. We prove that $\Idl{\mathds{Z}}$ has no weak basis (and thus no basis either) as a $\{\bot,\top\}$-module, despite the fact that $\{\bot,\top\}$ is a field quantale. Assume by contradiction that $\Idl{\mathds{Z}}$ has a weak basis, and consider an ideal $(n)$ of the weak basis with $n\neq 1$ (one such ideal exists because, e.g.\ $(2)$ needs to be written as a sum of ideals of the weak basis). Then $(n^2)\subset (n)$ can be written in a unique way as a (possibly infinite) sum of ideals $\{T_j\}_{j\in J}$ of the weak basis. But since $(n)=(n)+(n^2)$, all the ideals $T_j$ need to appear in representing $(n)$ as a sum of ideals of the weak basis. This is a contradiction, as also $(n)=(n)$ is a representation of $(n)$ as a sum of ideals of the weak basis, and such a representation should be unique.
\end{example}

In light of Remark \ref{remlackbasis}, we shall sometimes require the existence of a basis as an explicit assumption on our quantales.

\begin{proposition}
\label{propatomic}
Every atomic frame is a module with a weak basis over the field quantale $\{\bot,\top\}$, with the weak basis given by the atoms.
\end{proposition}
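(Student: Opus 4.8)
The plan is to reduce the statement, via Remark~\ref{remeverylatticemoduleoverdivision}, to a clean lattice-theoretic fact about atoms. Recall that every complete lattice is already a $\{\bot,\top\}$-module by Example~\ref{fex}(\ref{exaeverylatticeismodule}), so the only content is exhibiting the weak basis. In the quantale $\{\bot,\top\}$ the condition $\alpha_j\neq\bot$ forces $\alpha_j=\top$, and $\top\cdot t_j=t_j$; hence a weak basis of an atomic frame $M$ is precisely a family $\{t_i\}_{i\in I}$ such that every $m\in M$ can be written as a join $\bigvee_{j\in J}t_j$ over a \emph{unique} subset $J\subseteq I$. I would take $I$ to be the set $A$ of atoms of $M$ (which is a set, being a subset of $M$) with $t_a:=a$, and prove existence and uniqueness of the representation.

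For existence, atomicity of $M$ says exactly that every $m\in M$ equals $\bigvee\{a\in A\mid a\leq m\}$, so $J:=\{a\in A\mid a\leq m\}$ represents $m$ as a join of atoms. For uniqueness, suppose $m=\bigvee_{j\in J}a_j$ for some set $J\subseteq A$. Since each $a_j\leq m$ we get $J\subseteq\{a\in A\mid a\leq m\}$. Conversely, take an atom $a\leq m$. Using that $M$ is a frame, finite meets distribute over arbitrary joins, so $a=a\wedge\bigvee_{j\in J}a_j=\bigvee_{j\in J}(a\wedge a_j)$. For each $j$, the element $a\wedge a_j$ lies below the atom $a$, hence equals $\bot_M$ or $a$; if it were $\bot_M$ for every $j$ we would get $a=\bot_M$, contradicting that $a$ is an atom. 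So $a\wedge a_j=a$ for some $j$, giving $a\leq a_j$ and, since $a_j$ is also an atom, $a=a_j\in J$. Therefore $J=\{a\in A\mid a\leq m\}$, which is the asserted uniqueness.

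The main (and essentially only) obstacle is the uniqueness step, and within it the distributivity identity $a\wedge\bigvee_{j}a_j=\bigvee_{j}(a\wedge a_j)$: this is exactly where the hypothesis that $M$ is a frame, rather than merely an atomic complete lattice, is used. (Atomicity alone is also genuinely needed, for existence; e.g.\ $\Idl{\mathds{Z}}$ fails to be atomic, consistently with Example~\ref{exalackbasis}.) I would also remark that the joins produced here are in general infinite, so this is a weak basis and not a basis, in line with Remark~\ref{remlackbasis}.
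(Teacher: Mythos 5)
Your proof is correct and takes essentially the same approach as the paper: atomicity gives existence of the representation, and the frame distributivity law combined with the fact that distinct atoms meet to $\bot$ gives uniqueness. The only cosmetic difference is that you pivot uniqueness on the canonical set $\{a\in A\mid a\leq m\}$, whereas the paper directly compares two arbitrary representations; the key computation is identical.
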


\begin{proof}
By Remark \ref{remeverylatticemoduleoverdivision}, every complete lattice $L$ can be seen as a module over the field quantale $\{\bot,\top\}$ of truth values. Assume that $L$ is an atomic frame. We prove that the set $\{a_i\}_{i\in I}$ atoms of $L$ is a weak basis of $L$, as a $\{\bot,\top\}$-module. Since $L$ is atomic, every element $\ell\in L$ can be written as an (arbitrary) join of atoms. Thanks to Remark \ref{remeverylatticemoduleoverdivision}, it just remains to prove that such writings are unique. Let $\ell\in L$ and consider two writings
\[\bigvee_{j\in J}a_j=\ell=\bigvee_{k\in K}a_k\]
of $\ell$ as a join of atoms, with $J,K\subseteq I$. We show that $J=K$. So consider $\overline{j}\in J$. Since $L$ is a frame,
\[\bigvee_{j\in J}(a_{\overline{j}}\wedge a_j)=a_{\overline{j}}\wedge\left(\bigvee_{j\in J}a_j\right)=a_{\overline{j}}\wedge\left(\bigvee_{k\in K}a_k\right)=\bigvee_{k\in K}(a_{\overline{j}}\wedge a_k).\]
Notice now that if $i\in I$ is such that $i\neq \overline{j}$, then $a_{\overline{j}}\wedge a_i=\bot$. Indeed, $a_{\overline{j}}\wedge a_i\leqslant a_{\overline{j}}$. Since $a_{\overline{j}}$ is an atom, $a_{\overline{j}}\wedge a_i$ is then either $\bot$ or $a_{\overline{j}}$. But if it were equal to $a_{\overline{j}}$, we would have $a_{\overline{j}}\leqslant a_i$, which is a contradiction because $a_i$ is an atom. We conclude that $\bigvee_{j\in J}(a_{\overline{j}}\wedge a_j)=a_{\overline{j}}$ and that one of the $k$'s needs to be $\overline{j}$. Whence $J\subseteq K$. Analogously, $K\subseteq J$.
\end{proof}

\begin{corollary}
Every quantale $\P{X}$ of the power set of a set $X$ is a module with a weak basis over the field quantale $\{\bot,\top\}$, with the weak basis given by singletons.
\end{corollary}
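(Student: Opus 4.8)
The plan is to exhibit $\P{X}$ as an atomic frame and then invoke Proposition~\ref{propatomic}, after identifying its atoms with the singletons. First I would recall that $\P{X}$, ordered by inclusion, is a complete lattice in which joins are unions and meets are intersections; moreover it is a frame, since $S\cap\bigl(\bigcup_{i}T_i\bigr)=\bigcup_{i}(S\cap T_i)$ holds for every $S\in\P{X}$ and every family $\{T_i\}$ in $\P{X}$ (this is the elementary distributivity of intersection over arbitrary unions). So $\P{X}$ is a frame.

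Next I would show that $\P{X}$ is atomic with atoms exactly the singletons $\{x\}$, $x\in X$. Indeed, $\{x\}$ is an atom: its only proper subset is $\emptyset=\bot$, so there is nothing strictly between $\bot$ and $\{x\}$. Conversely, every nonempty $S\in\P{X}$ satisfies $S=\bigcup_{x\in S}\{x\}$, so $S$ is a join of atoms and in particular dominates the atom $\{x\}$ for any $x\in S$; hence $\P{X}$ is atomic and no atom other than a singleton can exist. This establishes that the set of atoms of $\P{X}$ is precisely $\{\,\{x\}\mid x\in X\,\}$.

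Finally, by Proposition~\ref{propatomic}, since $\P{X}$ is an atomic frame it is a module with a weak basis over the field quantale $\{\bot,\top\}$, with the weak basis given by its atoms. By the identification of the previous step, this weak basis is exactly the family of singletons. This completes the argument.

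There is essentially no hard part: the statement is a direct specialization of Proposition~\ref{propatomic}, and the only content is the routine verification that $\P{X}$ is an atomic frame whose atoms are the singletons. If anything requires a word of care, it is making explicit that the uniqueness of the weak-basis representation $S=\bigcup_{x\in J}\{x\}$ corresponds, as in the proof of Proposition~\ref{propatomic}, to the fact that $\{x\}\cap\{y\}=\emptyset$ for $x\neq y$, which forces $J=S$; but this is already covered by the cited proposition.
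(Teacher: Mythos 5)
Your proof is correct and follows exactly the same route as the paper: verify that $\P{X}$ is an atomic frame whose atoms are the singletons, then apply Proposition~\ref{propatomic}. You simply spell out the routine verifications that the paper leaves implicit.
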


\begin{proof}
Straightforward application of Proposition \ref{propatomic}. Notice that $\P{X}$ is an atomic frame, and its atoms are precisely the singletons $\{x\}$ with $x\in X$.
\end{proof}

Aiming at a density result, we define a notion of (weakly) dense quantale of linear operators.

\begin{definition}\label{defdense}
Let $M$ be a $Q$-module. 
\begin{enumerate}
\item[$\bullet$] A \emph{weakly dense quantale of linear operators of $M$} is a subquantale $D$ of $\End[Q]{M}$ such that for every $t,m\in M$ with $t\neq \bot_M$ there exists $f\in D$ such that $f(t)=m$. 

\item[$\bullet$] Suppose $M$ has a basis. A \emph{dense quantale of linear operators of $M$} is a subquantale $D$ of $\End[Q]{M}$ such that, for every finite set $\{t_1,\ldots t_n\}$ of elements of $M$ completable to a basis and every other set $\{m_1,\ldots, m_n\}$ of elements of $M$, there exists $f\in D$ such that $f(t_i)=m_i$ for every $i\in \{1,\ldots,n\}$.
\end{enumerate}  
\end{definition}

\begin{remark}\label{remnoteveryelemcompletable}
Not every element of a module with a basis can be completed to a basis. As a counterexample, consider the $\{\bot,\top\}$-module $\P{\{1,2,3\}}$ of the power set of the set $\{1,2,3\}$. The element $\{1,2\}$ cannot be completed to a basis. Indeed, the only way to write $\{1\}$ as a join in $\P{\{1,2,3\}}$ is to use $\{1\}$ itself, and analogously for $\{2\}$. But then $\{1,2\}$ would be written both as $\{1,2\}$ and as $\{1\}\vee \{2\}$.
\end{remark}

Nevertheless, the following  holds.
 
\begin{proposition}\label{propdenseimpliesweaklydense}
Let $M$ be a module with a basis over a division quantale $Q$. Every dense quantale of linear operators of $M$ is also a weakly dense quantale of linear operators of $M$.
\end{proposition}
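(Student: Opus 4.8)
\emph{Plan of proof.} The plan is to deduce weak density from density by passing through the basis. The subtlety to keep in mind is that an arbitrary nonzero $t\in M$ need not itself be completable to a basis (Remark~\ref{remnoteveryelemcompletable}), so one cannot simply invoke the density hypothesis on the one-element set $\{t\}$. Instead, I would decompose $t$ along the basis and apply density to genuine basis elements, on which prescribing an endomorphism is allowed; the division-quantale hypothesis is exactly what lets us absorb the coefficients appearing in this decomposition.

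Concretely, fix a basis $\{t_i\}_{i\in I}$ of $M$, a dense quantale $D$ of linear operators of $M$, and elements $t,m\in M$ with $t\neq\bot_M$; the goal is to produce $f\in D$ with $f(t)=m$. First I would write $t$ in the basis, $t=\bigvee_{j\in J}(\alpha_j\cdot t_j)$ with $J\cont I$ finite and each $\alpha_j\neq\bot_Q$. Since $t\neq\bot_M$ forces $J\neq\emptyset$, fix some $\overline{j}\in J$ and let $\alpha_{\overline{j}}^{-1}$ be its inverse in the division quantale $Q$.

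Next, since $\{t_j\}_{j\in J}$ is a finite subset of the basis, it is in particular completable to a basis, so density of $D$ yields $f\in D$ with $f(t_{\overline{j}})=\alpha_{\overline{j}}^{-1}\cdot m$ and $f(t_j)=\bot_M$ for every $j\in J\setminus\{\overline{j}\}$. Finally, using that $f$ preserves joins and is equivariant, together with $\alpha\cdot\bot_M=\bot_M$, the module axiom $(\alpha_{\overline{j}}\ast\alpha_{\overline{j}}^{-1})\cdot m=\alpha_{\overline{j}}\cdot(\alpha_{\overline{j}}^{-1}\cdot m)$, and $e_Q\cdot m=m$, I would compute
\[f(t)=f\!\left(\bigvee_{j\in J}(\alpha_j\cdot t_j)\right)=\bigvee_{j\in J}(\alpha_j\cdot f(t_j))=(\alpha_{\overline{j}}\ast\alpha_{\overline{j}}^{-1})\cdot m=m,\]
as desired, which shows that $D$ is a weakly dense quantale of linear operators of $M$.

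I expect the only real obstacle to be conceptual rather than technical: realizing that $t$ must be broken into its basis components before density is used (its one-element set $\{t\}$ may not be completable to a basis), and that the division structure enters precisely to invert the leading coefficient $\alpha_{\overline{j}}$. Once the correct $f$ is singled out, the verification that $f(t)=m$ is the routine calculation above; one should merely note that a finite subset of a basis counts as completable to a basis, since it sits inside the basis it came from.
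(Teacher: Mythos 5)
Your proof is correct and follows essentially the same route as the paper: decompose $t$ along the basis, apply density to the finite sub-basis $\{t_j\}_{j\in J}$ with targets $\alpha_{\overline{j}}^{-1}\cdot m$ at $\overline{j}$ and $\bot_M$ elsewhere, then use join-preservation and the module axioms to conclude $f(t)=m$. The only (welcome) additions are the explicit remark that $t\neq\bot_M$ forces $J\neq\emptyset$ and the careful unwinding of the axioms in the final computation.
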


\begin{proof}
Denote by $\{t_i\}_{i\in I}$, the basis of $M$ as a $Q$-module. Let $D\subseteq \End[Q]{M}$ be a dense quantale of linear operators of $M$. Let  $t,m\in M$ with $t\neq \bot_M$. We prove that there exists $f\in D$ such that $f(t)=m$. We can write $t$ as a finite linear combination of the basis:
\[t=\bigvee_{j\in J}(\alpha_j\cdot t_j),\]
with $J\cont I$, $\alpha_j\in Q$ and $\alpha_j\neq \bot_Q$ for every $j\in J$. Consider a fixed $\overline{j}\in J$. Of course, $\{t_j\}_{j\in J}$ is completable to a basis of $M$. Furthermore, there exists an inverse $\alpha_{\overline{j}}^{-1}$ of $\alpha_{\overline{j}}$, as $Q$ is a division quantale. Since $D$ is a dense quantale of linear operators of $M$, there exists $f\in D$ such that $f(t_{\overline{j}})=\alpha_{\overline{j}}^{-1}\cdot m$ and $f(t_j)=\bot_M$ for every $j\in J$ with $j\neq \overline{j}$. Then
\[
f(t)=f(\bigvee_{j\in J}(\alpha_j\cdot t_j))=\bigvee_{j\in J}(\alpha_j\cdot f(t_j))=\alpha_{\overline{j}}\cdot (\alpha_{\overline{j}}^{-1}\cdot m)=m. \qedhere
\]
\end{proof}

\begin{proposition}
Let $M$ be a $Q$-module with a weak basis. Then  $\End[Q]{M}$ is a dense quantale of linear operators of $M$.
\end{proposition}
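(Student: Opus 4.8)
The plan is to produce the required endomorphisms by hand, using the extension-from-a-weak-basis result of Proposition~\ref{propbasis}.

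The quantale structure on $\End[Q]{M}$ is supplied by Proposition~\ref{propendqm}, and $\End[Q]{M}$ is of course a subquantale of itself, so the only thing left to prove is the density condition. First I would fix a weak basis $\{t_i\}_{i\in I}$ of $M$ (if $M$ carries an actual basis, that basis is in particular a weak basis). Let $\{s_1,\dots,s_n\}$ be a finite set of elements of $M$ completable to a basis and let $m_1,\dots,m_n\in M$ be arbitrary. Completing $\{s_1,\dots,s_n\}$ produces a basis of $M$; replacing $\{t_i\}_{i\in I}$ by this basis, I may assume $s_1,\dots,s_n$ occur among the $t_i$, say $s_k=t_{j_k}$ with the indices $j_k$ pairwise distinct (the $s_k$ are distinct because they form a set).

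Next I would define an assignment on the weak basis by $t_{j_k}\mapsto m_k$ for $k\in\{1,\dots,n\}$ and $t_i\mapsto t_i$ for every other $i\in I$. By Proposition~\ref{propbasis} this assignment extends to a homomorphism of $Q$-modules $f\colon M\to M$, that is, an element $f\in\End[Q]{M}$. To finish I would check that $f(s_k)=m_k$ for each $k$. The key point is that $t_{j_k}=e_Q\cdot t_{j_k}$, and that $e_Q\neq\bot_Q$ since $Q$ is non-trivial (if $e_Q=\bot_Q$, then $\alpha=\alpha\ast e_Q=\alpha\ast\bot_Q=\bot_Q$ for every $\alpha\in Q$). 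Hence $e_Q\cdot t_{j_k}$ is a legitimate one-term expansion of $t_{j_k}$ on the weak basis, and by the uniqueness clause in the definition of a weak basis it is \emph{the} expansion of $t_{j_k}$; therefore the extension $f$ built in Proposition~\ref{propbasis} sends $t_{j_k}$ to $e_Q\cdot m_k=m_k$, i.e.\ $f(s_k)=m_k$. This shows $\End[Q]{M}$ is a dense quantale of linear operators of $M$.

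There is no serious obstacle here: once Proposition~\ref{propbasis} is available, the argument is essentially bookkeeping. The one place to take care is the last step — confirming that $f$ attains the prescribed values \emph{exactly} on the $t_{j_k}$ — which rests on the uniqueness of weak-basis expansions together with the observation that $e_Q\neq\bot_Q$, and on the fact that the elements of a finite set completable to a basis are automatically distinct and different from $\bot_M$ (since $\bot_M$ admits the empty expansion and so cannot belong to a weak basis).
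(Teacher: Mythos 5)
Your proof is correct and follows essentially the same route as the paper: complete the given finite set to a basis, define an assignment on that basis prescribing the desired values on $s_1,\dots,s_n$, and invoke Proposition~\ref{propbasis} to extend it to an element of $\End[Q]{M}$. The only cosmetic differences are that the paper sends the remaining basis elements to $\bot_M$ rather than to themselves, and that your closing verification that $f(s_k)=m_k$ is already contained in the ``extends the original assignment'' clause of Proposition~\ref{propbasis}, so it need not be re-derived.
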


\begin{proof}
Let $\{t_1,\ldots t_n\}$ be a finite set of elements of $M$ completable to a basis $B$ and let $\{m_1,\ldots, m_n\}$ be another set of elements of $M$. Thanks to Proposition \ref{propbasis}, the assignment $t_j\mapsto m_j$ for every $j\in \{1,\ldots,n\}$ and $t\mapsto \bot_M$ for every $t$ in the basis $B$ with $t$ different from the $t_j$'s, can be extended to a homomorphism of modules $M\to M$. 
\end{proof}

\begin{remark}
In the classical definition of a dense ring of linear operators, one asks $\{t_1,\ldots,t_n\}$ to be linearly independent, as opposed to our request that it is completable to a basis, in Definition \ref{defdense}. Of course, for modules over a division ring, the two conditions are equivalent. But they are not equivalent in the context of modules over a quantale, as shown by Remark \ref{remnoteveryelemcompletable}. The following example shows the importance of the request that $\{t_1,\ldots,t_n\}$ is completable to a basis in Definition \ref{defdense}.
\end{remark}

\begin{example}
In the $\{\bot,\top\}$-module $\P{\{1,2,3\}}$ of the power set of the set $\{1,2,3\}$, the elements $\{1,2\}$ and $\{1\}$ are linearly independent in the sense that $\langle \{1,2\} \rangle\cap \langle \{1\} \rangle=\{\bot\}$. Indeed
$$\langle \{1,2\} \rangle=\{\bot,\top\}\cdot \{1,2\}=\{\emptyset,\{1,2\}\}\quad \text{ and } \quad \langle \{1\} \rangle=\{\emptyset,\{1\}\}.$$
But there exists no homomorphism of modules $f\:\P{\{1,2,3\}}\to \P{\{1,2,3\}}$ over $\{\bot,\top\}$ such that $f(\{1,2\})=\{1\}$ and $f(\{1\})=\{2\}$. Indeed, $f$ would need to satisfy
$$f(\{1,2\})=f(\{1\}\vee \{2\})=f(\{1\})\vee f(\{2\})=\{2\}\vee f(\{2\})\supseteq \{2\}.$$
\end{example}

We can now present a density theorem for primitive quantales.

\begin{theorem}
Let $M$ be a $Q$-module. Then the following hold:
\begin{enumerate}
\item Every weakly dense quantale of linear operators of a module $M$  is strongly primitive (and thus primitive).
In particular, every dense quantale of linear operators of a module $M$ with a basis over a division quantale is strongly primitive.
        
\item Every strongly primitive quantale is isomorphic to a weakly dense quantale of linear operators of a module $M$.
\end{enumerate}
\end{theorem}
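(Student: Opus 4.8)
The plan is to handle the two parts with complementary constructions. For (1), the module witnessing strong primitivity of a weakly dense $D\cont\End[Q]{M}$ will be $M$ itself, regarded as a $D$-module by restricting along the subquantale inclusion $D\cont\End[Q]{M}$ the module structure of Proposition \ref{propendqm}. For (2), given a strongly faithful simple $Q$-module $M$, I will set $D:=\End[Q]{M}$ and exhibit $Q$ as a weakly dense quantale of linear operators of $M$ viewed as a $D$-module.

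For (1): by Proposition \ref{propendqm}, $M$ is a module over $\End[Q]{M}$ via evaluation, and restricting scalars along $D\cont\End[Q]{M}$ makes $M$ a $D$-module with $f\cdot m=f(m)$. Simplicity: for $m\in M$ with $m\neq\bot_M$, weak density provides, for each $m'\in M$, some $f\in D$ with $f(m)=m'$; hence $\langle m\rangle=D\cdot m=M$, so $M$ is a simple $D$-module. Strong faithfulness: if $f,g\in D$ satisfy $f\cdot m=g\cdot m$ for all $m\in M$, then $f(m)=g(m)$ for all $m$, i.e.\ $f=g$. Thus $M$ is a strongly faithful simple $D$-module, so $D$ is strongly primitive, and primitive as well by Remark \ref{weaklyfaithful}. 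The ``in particular'' clause is then immediate from Proposition \ref{propdenseimpliesweaklydense}, which says that over a division quantale every dense quantale of linear operators of a module with a basis is weakly dense.

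For (2): let $M$ be a strongly faithful simple $Q$-module and put $D:=\End[Q]{M}$, a quantale over which $M$ is a module by Proposition \ref{propendqm}. Define $\Phi\:Q\to\End{M}$ by $\Phi(\alpha)(m):=\alpha\cdot m$. The crucial point — the exact analogue of the classical Jacobson setup, and the step I expect to be the main obstacle to phrase cleanly — is that $\Phi(\alpha)$ in fact lands in $\End[D]{M}$, not merely in $\End{M}$: it preserves joins because $\cdot$ does, and it is $D$-equivariant because every $f\in D$ is $Q$-equivariant, so $\Phi(\alpha)(f(m))=\alpha\cdot f(m)=f(\alpha\cdot m)=f(\Phi(\alpha)(m))$. (Note that $\Phi(\alpha)$ is generally \emph{not} $Q$-equivariant, which is exactly why one must pass to $D$-linearity.) Next, $\Phi$ is a homomorphism of quantales $Q\to\End[D]{M}$: it preserves joins since $\cdot$ does, it sends $\ast$ to composition by the second module axiom, and it sends $e_Q$ to $\id{M}$ by the third. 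It is injective precisely because $M$ is strongly faithful: $\Phi(\alpha)=\Phi(\beta)$ means $\alpha\cdot m=\beta\cdot m$ for all $m\in M$, whence $\alpha=\beta$. Hence $Q\cong\Phi(Q)$, a subquantale of $\End[D]{M}$. Finally, $\Phi(Q)$ is weakly dense: given $t,m\in M$ with $t\neq\bot_M$, simplicity of $M$ gives $Q\cdot t=\langle t\rangle=M$, so $m=\alpha\cdot t=\Phi(\alpha)(t)$ for some $\alpha\in Q$. This identifies $Q$ with a weakly dense quantale of linear operators of the $D$-module $M$.

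Beyond that $D$-linearity observation, the only real care needed is bookkeeping about which quantale $M$ is a module over at each stage (a $Q$-module, then a $D$-module, with $\Phi(Q)$ dense inside $\End[D]{M}$), and recording that it is strong faithfulness, not mere faithfulness, that forces $\Phi$ to be injective — since quantale kernels do not detect injectivity, as noted in Remark \ref{remfaithfulann}.
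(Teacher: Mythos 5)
Your proof is correct and follows essentially the same route as the paper's: in (1) you make $M$ a $D$-module by restriction of scalars and verify strong faithfulness and simplicity exactly as in the text; in (2) your $\Phi$ is precisely the paper's $\psi$, obtained by showing $[\cdot]\:Q\to\End{M}$ factors through $\End[{\End[Q]{M}}]{M}$, with injectivity from strong faithfulness and weak density from simplicity. The only differences are cosmetic — you spell out the $D$-equivariance check and the homomorphism axioms that the paper compresses into a reference to $[\cdot]$.
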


\begin{proof}
(1) Let $D\cont \End[Q]{M}$ be a weakly dense quantale of linear operators of a $Q$-module $M$. We prove that $M$ is a strongly faithful simple module over $D$, so that $D$ is strongly primitive. By Proposition \ref{propendqm}, $M$ is a module over $\End[Q]{M}$. Then $M$ is also a module over the subquantale $D\cont \End[Q]{M}$. In order to prove that $M$ is strongly faithful, consider $f,g\:M\to M$ in $D$ such that $f\cdot m=g\cdot m$ for every $m\in M$. Since $f\cdot m=f(m)$ and $g\cdot m=g(m)$, we conclude that $f=g$. Finally, we show that $M$ is simple as a $D$-module. Let $m\in M$ with $m\neq \bot_M$. Then \[\langle m \rangle =D\cdot m=M.\] Indeed, since $D$ is a weakly dense quantale of linear operators of $M$, for every $n\in M$ there exists $f\in D$ such that $f(m)=n$. And thus $n=f\cdot m\in D\cdot m$. We conclude that $M$ is a strongly faithful simple module over $D$.
Thanks to Proposition \ref{propdenseimpliesweaklydense}, we then have that every dense quantale of linear operators of a module $M$ with a basis over a division quantale is strongly primitive.

(2) Let $Q$ be a strongly primitive quantale and let $M$ be a strongly faithful simple $Q$-module. We consider the quantale $\End[Q]{M}$ of endomorphisms of modules from $M$ to $M$ over $Q$. By Proposition \ref{propendqm}, $M$ is a module over $\End[Q]{M}$. Moreover, the homomorphism of quantales $[\cdot]\:Q\to \End{M}$ associated to $M$ factors through the subquantale $\End[{\End[Q]{M}}]{M}$ of $\End{M}$. Indeed, for every $\alpha\in Q$, the join-preserving function $[\cdot](\alpha)=\alpha\cdot -\:M\to M$ is a homomorphism of ${\End[Q]{M}}$-modules, as for every $f\in \End[Q]{M}$ and $m\in M$ we have $\alpha\cdot (f(m))=f(\alpha\cdot m)$. Call $\psi$ the resulting homomorphism of quantales $Q\to \End[{\End[Q]{M}}]{M}$. By Proposition \ref{propfaithfulann} and Remark \ref{remfaithfulann}, since $M$ is strongly faithful, $[\cdot]$ is injective. And then also $\psi$ needs to be injective. Therefore, $\psi$ exhibits an isomorphism of quantales between $Q$ and
\[\Im{\psi}=\set{\alpha\cdot -\:M\to M}{\alpha\in Q}\cont\End[{\End[Q]{M}}]{M}.\]
It remains to prove that $\Im{\psi}$ is a weakly dense quantale of linear operators of $M$, over $\End[Q]{M}$. So let $t,m\in M$ with $t\neq \bot_M$. Since $M$ is a simple $Q$-module, $Q\cdot t=\langle t \rangle =M$. Then there exists $\alpha\in Q$ such that $\alpha\cdot t=m$. Whence $\alpha\cdot -\in \Im{\psi}$ is a homomorphism of modules $M\to M$ over $\End[Q]{M}$ such that $(\alpha\cdot -)(t)=m$.
\end{proof}

\begin{remark}
In the classical case, if $M$ is a simple $R$-module, then $\End[R]{M}$ is a division ring. This is because every non-zero linear map $M\to M$ has to be bijective since both the kernel and the image are submodules of $M$, and $M$ is simple. In the case of quantales, we still know that every homomorphism of modules $M\to M$ with $M$ simple has to be surjective; however, the kernel being zero does not guarantee injectivity.
\end{remark}

\end{document}